\newcommand{\nl}{\mbox{}\\}
\newcommand{\ww}{\mbox{\boldmath $w$}}
\newcommand{\uu}{\mbox{\boldmath $u$}}
\newcommand{\zz}{\mbox{\boldmath $z$}}
\newcommand{\RR}{\mathbb{R}}
\newcommand{\PP}{\mathbb{P}}
\newcommand{\HH}{\mbox{\boldmath $H$}}
\newcommand{\norm}[1]{\Vert #1 \Vert}
\newcommand{\intt}{\int_{t_0}^{t} \int_{\RR^3}}
\newcommand{\LL}{{L^2(\RR^3)}}
\newtheorem{lemma}{Lemma}
\begin{document}
\setcounter{page}{1}
%
%
\mbox{} \vspace{-2.000cm} \\
\begin{center}
{\Large \bf
 Asymptotic behavior of global weak solutions } \\
\mbox{} \vspace{-0.300cm} \\
{\Large \bf
for the micropolar dynamics in $L^2(\RR^3)$} \\
%
%
%
\nl
\mbox{} \vspace{-0.300cm} \\
{\sc Robert Guterres , Juliana Nunes and Cilon Perusato } \\
\mbox{} \vspace{-0.350cm} \\
%
%
\nl
\mbox{} \vspace{-0.300cm} \\
%
%
%
%
{\bf Abstract} \\
\mbox{} \vspace{-0.300cm} \\
\begin{minipage}{12.70cm}
{\small
\mbox{} \hspace{+0.250cm}
In this paper the long time behavior of the micropolar fluid equations energy on three dimensional space are studied.  We show that
$ \| (\uu,\ww)(\cdot,t) \|_{{L^2(\RR^3)}} \to 0 $ as $t \to \infty$ for Leray-Hopf's global weak
solutions in inviscid vortex case. 
Moreover, when the vortex viscosity are considered, i.e., $\chi >0$, we obtain a {(\it{faster})} decay for micro-rotational field:   $ \| \ww (\cdot,t) \|_{{L^2(\RR^3)}} = o(t^{-1/2})$. 

}
\end{minipage}
\end{center}

\nl
\mbox{} \vspace{-0.550cm} \\
{\sf AMS Mathematics Subject Classification:}
35Q35 (primary), 35B40, 76D05  \\
\nl
\mbox{} \vspace{-0.800cm} \\
{\sf Key words: } Micropolar fluid equations, Long-time behavior of weak solutions, decay rates in $ L^2 $.  
%
%
%
%

%
%
\nl
\mbox{} \vspace{-0.650cm} \\

{\bf 1. Introduction} \\
\setcounter{section}{1}
\par In this work we derive a long-time asymptotic behavior for Leray global weak solutions of incompressible micropolar equations in $L^2(\RR^3)$, that is, global solutions $(\uu,\ww)(\cdot,t) \in L^{\infty}((0,\infty), \:\!\mbox{\boldmath $L$}^{2}(\mathbb{R}^{n})) 
\,\cap
 $
$ {\displaystyle
	L^{2}((0,\infty), \:\!\dot{\mbox{\boldmath $H$}}\mbox{}^{\!\;\!1}\!\;\!(\mathbb{R}^{n}))
	\cap \;\!
	C_{w}([\;\!0, \infty), \mbox{\boldmath $L$}^{2}(\mathbb{R}^{n}))
} $ of the system 

\begin{subequations}\label{micropolar}
	\begin{equation}\label{uu_equation}
	\mbox{\boldmath $u$}_t
	\;\!+\,
	\mbox{\boldmath $u$} \cdot \nabla \mbox{\boldmath $u$}
	\,+\;\!
	\nabla \:\!{ P}
	\;=\;
	(\:\!\mu + \chi\:\!) \, \Delta \mbox{\boldmath $u$}
	\,+\,
	\chi \, \nabla \times {\bf w},
	\end{equation}
	\begin{equation}\label{ww_equation}
	{\bf w}_t
	\;\!+\,
	\mbox{\boldmath $u$} \cdot \nabla \mbox{\bf w}
	\;=\;
	\gamma \;\! \, \Delta \mbox{\bf w}
	\,+\, \nabla \:\!(\:\!\nabla \cdot\;{\bf w} \:\!)
	\,+\,
	\chi \, \nabla \times\, \mbox{\boldmath $u$}
	\,-\, 2 \, \chi \, {\bf w},
	\end{equation}
	\begin{equation}
	\nabla \cdot \mbox{\boldmath $u$}(\cdot,t) \,=\, 0,
	\end{equation}
\end{subequations}
with initial data 
$ {\displaystyle
	(\:\!\mbox{\boldmath $u$}_0, \mbox{\boldmath $w$}_0) \in
	\mbox{\boldmath $L$}^{2}_{\sigma}(\mathbb{R}^{n})
	\!\times\!
	\mbox{\boldmath $L$}^{2}(\mathbb{R}^{n})
} $, that 
\begin{equation*}
\| (\uu,\ww)(\cdot,t) - (\uu_0,\ww_0) \|_{L^2(\RR^3)}  \to 0,
\end{equation*} 
as $t \to 0$ and such that the \textit{strong} energy inequality\footnote{For the definition of $\|(\uu,\ww) \|_{L^2(\RR^3)} $ , $\|(D\uu,D\ww) \|_{L^2(\RR^3)} $ and other similar expressions throughout the text, see (1.5$e$) and (1.5$f$) }
\begin{equation}\label{energy}
	\begin{split}
		\norm{(\uu,\ww)(\cdot,t)}^2_\LL +2\mu   \int_{t_0}^t \norm{D \uu(\cdot, \tau)}^2_\LL d\tau \\+ 2 \gamma \int_{t_0}^t \norm{D \ww(\cdot, \tau)}^2_\LL d\tau + 2 \int_{t_0}^{t} \norm{\nabla \cdot \ww(\cdot, \tau)}^2_\LL d\tau \\+ 2 \chi \int_{t_0}^{t} \norm{\ww(\cdot,\tau)}^2_\LL d\tau \leq \norm{(\uu,\ww)(\cdot,t_0)}^2_\LL, \, \, \forall t>t_0
	\end{split}
	\end{equation}
for a.e $ t_0 \geq 0$, including $ t_0 = 0$. In (\ref{micropolar}),
$ \mu, \gamma > 0 $ are the kinematic and spin viscosities, and $ \chi \geq 0 $ is the vortex viscosity,  
\mbox{$ \mbox{\boldmath $u$} = \mbox{\boldmath $u$}(x,t) $}, \mbox{$ \mbox{\boldmath $w$} = \mbox{\boldmath $w$}(x,t) $}
 and $ P = P(x,t) $
are the flow velocity, micro-rotational velocity and hydrostatic pressure, respectively, for $ t>0 $ and $ x \in \RR^3 $.
As usual,
$ \!\;\!\mbox{\boldmath $L$}^{2}_{\sigma}(\mathbb{R}^{3}) $
is the
space
of solenoidal fields
$ \:\!\mbox{\bf v} = (v_{1}, v_{2}, v_{3}) \!\:\!\in
\mbox{\boldmath $L$}^{2}(\mathbb{R}^{3}) \equiv L^{2}(\mathbb{R}^{3})^{3} \!\:\!$	
with
$ \nabla \!\cdot \mbox{\bf v} \!\;\!= 0 $
in the distributional sense, 
$ \dot{\mbox{\boldmath $H$}}\mbox{}^{1}(\mathbb{R}^{3}) =
\dot{H}^{1}(\mathbb{R}^{3})^{3} $
where
$ \dot{H}^{1}(\mathbb{R}^{3}) $
denotes the homogeneous Sobolev space of order~$1$,
and
$ C_{w}(I, \:\!\mbox{\boldmath $L$}^{2}(\mathbb{R}^{3})) $
denotes the set of mappings from a
given interval~\mbox{$ I \subseteq \mathbb{R} $}
to
$ \mbox{\boldmath $L$}^{2}(\mathbb{R}^{3}) $
that are $L^{2}$-\;\!weakly continuous
at each $\:\! t \in I$.
By the Leray method to construct weak solutions of problem (\ref{micropolar}) there always exist some $ t_* \gg 1$ - depending on the solution $(\uu,\ww) $ - such that one has 

for each $t_* < T < \infty$, that is, $ (\uu,\ww)(\cdot,t) \in L^\infty_{\text{loc}} ([t_*,\infty), \HH^m(\RR^3))$
\\

\begin{subequations}\label{u_w_suave}
	 \begin{equation}
	 (\uu,\ww) \in C^\infty(\RR^3 \times [t_*,\infty))
	 \end{equation}
	 \mbox{ and, for each $ m \in \mathbb{Z}_+$:}
	 \begin{equation}
	 (\uu,\ww)(\cdot,t) \in L^\infty ([t_*,T), \HH^m(\RR^3)).
	 \end{equation}
\end{subequations}
\par In 1966, A.C. Eringen proposed, in his paper entitled Theory of micropolar fluids
(see \cite{Eringen1966}), a study about the system (\ref{micropolar}). In the literature, such fluids are called micropolar. Physically, micropolar fluids represent fluids consisting of rigid, randomly oriented (or spherical) particles suspended in a viscous medium, where the deformation of fluid particles is ignored. They can describe many phenomena appearing in a large number of complex fluids such as suspensions, blood motion
in animals, and liquid crystals. For more information on these type of fluids, see \cite{Lukaszewicz1999} and
the references therein.

There are many results on the existence and uniqueness of solutions for problems related
to system \ref{micropolar}) (see, for example, \cite{boldrini2010,boldrini1998,cannone2006, chen2012, Eringen1966,galdi1977, Lukaszewicz1999,Rojasmedar1997, yamaguchi2005, Yuan2008, Yuan2010}). Specifically, in 1977, G.P. Galdi and S. Rionero \cite{galdi1977} showed existence and uniqueness of weak solutions to the
initial boundary-value problem for the micropolar system (in this case, $\Omega \subset \RR^3$ is a connected
open set that replaces the whole space $\RR^3$ in (\ref{micropolar}) such that the solution vanishes on  $\partial \Omega \times [0,T]$). For the same problem, G. Lukaszewicz \cite{Lukaszewicz1989} proved existence and uniqueness of strong solutions in 1989, and, in 1990, established the global existence of weak solutions
with sufficiently regular initial data (see \cite{Lukaszewicz1999}). In 1997, M.A. Rojas-Medar \cite{Rojasmedar1997} proved the local existence and uniqueness of strong solutions. E.E. Ortega-Torres and
M.A. Rojas-Medar in 1999, assuming small initial data, proved the global existence of a
strong solution (see \cite{ortega1999}). The results in these last two works were obtained through a spectral
Galerkin method. In 2010, J.L. Boldrini, M. Durán and M.A. Rojas-Medar \cite{boldrini2010} proved
existence and uniqueness of strong solutions in $L^p(\Omega)$, for $p > 3$.

%
%
%
{\bf Notation.}
As shown above,
boldface letters
are used for
vector quantities,
as in
$ {\displaystyle
	\;\!
	\mbox{\boldmath $u$}(x,t)
	=
} $
$ {\displaystyle
	(\:\! u_{\mbox{}_{\!\:\!1}}\!\;\!(x,t), u_{\mbox{}_{\!\:\!2}}\!\;\!(x,t),
	\:\! u_{\mbox{}_{\scriptstyle \!\;\! 3}}\!\;\!(x,t) \:\!)
} $.
$\!$Also,
$ \nabla P \;\!\equiv \nabla P(\cdot,t) $
denotes the spatial gradient of $ \;\!P(\cdot,t) $,
$ D_{\!\;\!j} \!\;\!=\:\! \partial / \partial x_{\!\;\!j} $,
$ {\displaystyle
	\,\!
	\nabla \!\cdot \mbox{\boldmath $u$}
	\:\!=
	D_{\mbox{}_{\!\:\!1}} u_{\mbox{}_{\!\:\!1}} \!\;\!+
	D_{\mbox{}_{\!\:\!2}} u_{\mbox{}_{\!\:\!2}} \!\;\!+
	D_{\mbox{}_{\scriptstyle \!\;\!3}} \,\!
	u_{\mbox{}_{\scriptstyle \!\;\!3}}
} $
is the (spatial) divergence of
$ \:\!\mbox{\boldmath $u$}(\cdot,t) $.
$ |\,\!\cdot\,\!|_{\mbox{}_{2}} \!\,\!\,\!$
denotes the Euclidean norm
in $ \mathbb{R}^{3} \!$,
and
$ {\displaystyle
	\,\!\,\!
	\| \:\!\cdot\:\!
	\|_{\scriptstyle L^{q}(\mathbb{R}^{3})}
	\!\;\!
} $,
$ 1 \leq q \leq \infty $,
are the standard norms
of the Lebesgue spaces
$ L^{q}(\mathbb{R}^{3}) $,
with the vector counterparts \\
\mbox{} \vspace{-0.625cm} \\
\begin{subequations}\label{definitions}
	\begin{equation}
	\|\, \mbox{\boldmath $u$}(\cdot,t) \,
	\|_{\mbox{}_{\scriptstyle L^{q}(\mathbb{R}^{3})}}
	\;\!=\;
	\Bigl\{\,
	\sum_{i\,=\,1}^{3} \int_{\mathbb{R}^{3}} \!
	|\:u_{i}(x,t)\,|^{q} \,dx
	\,\Bigr\}^{\!\!\:\!1/q}
	\end{equation}
	\mbox{} \vspace{-0.700cm} \\
	\begin{equation}
	\|\, D \mbox{\boldmath $u$}(\cdot,t) \,
	\|_{\mbox{}_{\scriptstyle L^{q}(\mathbb{R}^{3})}}
	\;\!=\;
	\Bigl\{\,
	\sum_{i, \,j \,=\,1}^{3} \int_{\mathbb{R}^{3}} \!
	|\, D_{\!\;\!j} \;\!u_{i}(x,t)\,|^{q} \,dx
	\,\Bigr\}^{\!\!\:\!1/q}
	\end{equation}
	\mbox{} \vspace{-0.350cm} \\
	and, in general, \\
	\mbox{} \vspace{-0.750cm} \\
	\begin{equation}
	\|\, D^{m} \mbox{\boldmath $u$}(\cdot,t) \,
	\|_{\mbox{}_{\scriptstyle L^{q}(\mathbb{R}^{3})}}
	\;\!=\;
	\Bigl\{\!\!
	\sum_{\mbox{} \;\;i, \,j_{\mbox{}_{1}} \!,..., \,j_{\mbox{}_{m}} =\,1}^{3}
	\!\;\! \int_{\mathbb{R}^{3}} \!
	|\, D_{\!\;\!j_{\mbox{}_{1}}}
	\!\!\!\;\!\cdot\!\,\!\cdot\!\,\!\cdot \!\:\!
	D_{\!\;\!j_{\mbox{}_{m}}}
	\!\:\! u_{i}(x,t)\,|^{q} \,dx
	\,\Bigr\}^{\!\!\:\!1/q}
	\end{equation}
	\mbox{} \vspace{-0.175cm} \\
	if $ 1 \leq q < \infty $\/;
	if $\, q = \infty $,
	then
	$ {\displaystyle
		\;\!
		\|\, \mbox{\boldmath $u$}(\cdot,t) \,
		\|_{\mbox{}_{\scriptstyle L^{\infty}(\mathbb{R}^{3})}}
		\!=\;\!
		\max \, \bigl\{\,
		\|\,u_{i}(\cdot,t)\,
		\|_{\mbox{}_{\scriptstyle L^{\infty}(\mathbb{R}^{3})}}
		\!\!: \, 1 \leq i \leq 3
		\,\bigr\}
	} $, \linebreak
	\mbox{} \vspace{-0.530cm} \\
	$ {\displaystyle
		\|\, D \,\!\mbox{\boldmath $u$}(\cdot,t) \,
		\|_{\mbox{}_{\scriptstyle L^{\infty}(\mathbb{R}^{3})}}
		\!=\;\!
		\max \, \bigl\{\,
		\|\, D_{\!\;\!j} \;\! u_{i}(\cdot,t)\,
		\|_{\mbox{}_{\scriptstyle L^{\infty}(\mathbb{R}^{3})}}
		\!\!: \:\! 1 \leq i, \:\!j \leq 3
		\,\bigr\}
	} $
	and,
	for general \mbox{$m \!\;\!\geq\!\:\! 1$\/:} \\
	\mbox{} \vspace{-0.550cm} \\
	\begin{equation}
	\|\, D^{m} \mbox{\boldmath $u$}(\cdot,t) \,
	\|_{\mbox{}_{\scriptstyle L^{\infty}(\mathbb{R}^{3})}}
	\!\;\!=\;
	\max\,\Bigl\{\;\!
	\|\;\! D_{\!\;\!j_{\mbox{}_{1}}}
	\!\!\!\;\!\;\!\,\!\cdot \!\;\!\cdot \!\;\!\cdot \!\;\!\;\!
	D_{\!\;\!j_{\mbox{}_{m}}}
	\!\!\;\!\;\! u_{i}(\cdot,t)\,
	\|_{\mbox{}_{\scriptstyle L^{\infty}(\mathbb{R}^{3})}}
	\!\!\!\;\!: \;\!
	1 \leq \!\;\!\;\!
	i, \!\;\!\;\!j_{\mbox{}_{1}}\!\!\;\!\;\!, \!...\!\;\!\;\!,j_{\mbox{}_{m}}
	\!\leq 3
	\!\;\!\;\!\Bigr\}.
	\end{equation}
	\mbox{} \vspace{-0.200cm} \\
	Definitions (\ref{definitions}) are convenient,
	but not essential.
	However,
	some choice for the vector norms
	has to be made to fix the values
	of constants. We defined also for simplicity the following norms for a pair $(\uu,\ww)$ as usually made in literature:
	\begin{equation}
	\| (\uu,\ww) \|^2_{L^q(\RR^3)} := \|\uu\|^2_{L^q(\RR^3)} + \|\ww\|^2_{L^q(\RR^3)}
	\end{equation} 
	and more generally, for all $m \geq 1$ integer
	\begin{equation}
	\| (D^m\uu,D^m\ww) \|^2_{L^q(\RR^3)} := \|D^m\uu\|^2_{L^q(\RR^3)} + \|D^m\ww\|^2_{L^q(\RR^3)}
	\end{equation}
	for all $1 \leq q \leq \infty$. 
\end{subequations}
The constants will be represented by the letters C, c or K. For economy, we will use typically the same symbol to denote constants with different numerical values.  

%
%

\par In \cite{Kato1984}, Kato proved an open problem left by Leray, in 1934. More precisely, he showed that $ \|\uu(\cdot,t)\|_{L^2(\mathbb{R}^{3})} \to 0 $ as $ t \to \infty $ for Navier-Stokes system. Since then, several works have made considerable progress (see e.g \cite{Schonbek}), even for the magnetohydrodynamics case (\cite{SchonbekRubenAgapito2007} and \cite{PerusatoNunesGuterres}). More recently,
in \cite{SchutzZinganoJanainaZingano2015}, the authors showed that
\begin{equation*}
	\lim_{t \to \infty} \norm{\uu(\cdot,t)}_\LL = 0,
\end{equation*}
for Navier-Stokes equations using Duhamel's principle and, with this different approach, other decay problems were solved. In this way, we adapted this technique for the micropolar equations (\ref{micropolar}) and we were able to obtain a faster decay rate for norm $ L^2$ of $\ww$, when $ \chi >0 $, described below. 
\\
\\
\textbf{Main Theorem.}
\\
	\textit{For a Leray solution $(\uu,\ww)(\cdot,t)$ of (\ref{micropolar})  one has}
\begin{subequations}\label{leray}
	\begin{equation}\label{leray_uw}
		\lim_{t \to \infty} \norm{(\uu,\ww)(\cdot,t)}_\LL = 0.
	\end{equation}
	Moreover, if $\chi > 0$ then
	\begin{equation}\label{leray_w}
		\lim_{t \to \infty} t^{1/2} \norm{\ww(\cdot,t)}_\LL = 0.
	\end{equation}
\end{subequations}
However, it was necessary to prove the follow decay property for derivatives
\begin{equation*}
\lim_{t \to \infty} t^{1/2} \norm{(D\uu,D\ww)(\cdot,t)}_\LL = 0
\end{equation*}
and this will be made in the subsequent section. 
\\
\\
\\
{\bf 2. Preliminaries}
\setcounter{section}{2}
\\
\par First, we will obtain the derivatives monotonicity in $L^2(\RR^3)$, for some  $ t_0 >t_*$ large enough. 
\begin{equation*}
\|(D\uu,D\ww)(\cdot,t)\|_{L^2(\RR^3)} \leq \|(D\uu,D\ww)(\cdot,t_0)\|_{L^2(\RR^3)}, \;\;\text{ $ \forall t > t_0 $ }. 
\end{equation*}
This next argument is 
adapted from \cite{KreissHagstromLorenzZingano2003}. Using  (\ref{micropolar}) and  (\ref{u_w_suave}), we get
\begin{equation}\label{D_energia}
	\begin{split}
		\norm{D \uu(\cdot,t)}_\LL^2 + \norm{D \ww(\cdot, t)}_\LL^2 + 2(\mu + \chi)\int_{t_0}^{t} \norm{D^2 \uu(\cdot,\tau)}_\LL^2 d\tau \\ + 2\gamma \int_{t_0}^{t} \norm{D^2 \ww(\cdot,\tau)}_\LL^2 d\tau + 2\int_{t_0}^{t}\norm{D \nabla \cdot \ww(\cdot, \tau)}_\LL^2 d\tau \\ + 4 \chi \int_{t_0}^{t}\norm{D \ww(\cdot,\tau)}_\LL^2 d\tau   \leq \norm{D \uu(\cdot,t_0)}_\LL^2 + \norm{D \ww(\cdot, t_0)}_\LL^2 \\+4\int_{t_0}^{t}\norm{(\uu,\ww)(\cdot,\tau)}_{L^\infty(\mathbb{R}^{3})} \norm{(D\uu,D\ww)(\cdot,\tau)}_\LL \norm{(D^2 \uu,D^2 \ww)(\cdot,\tau)}_\LL d\tau\\ + 4 \chi \intt \sum_{i,j,k,l=1}^{3} \epsilon_{ijk}D_l w_i D_l D_j u_k dx d\tau \leq  
		\norm{(D \uu,D\ww)(\cdot,t_0)}_\LL^2  \\+4\int_{t_0}^{t}\norm{(\uu,\ww)(\cdot,\tau)}^{1/2}_\LL \norm{(D\uu,D\ww)(\cdot,\tau)}^{1/2}_\LL \norm{(D^2 \uu,D^2 \ww)(\cdot,\tau)}^2_\LL d\tau\\ + 4 \chi \int_{t_0}^{t} \bigg{(} \frac{1}{2} \norm{D \ww (\cdot,\tau)}_\LL^2 + \frac{1}{2} \norm{D^2 \uu (\cdot,\tau)}_\LL^2 \bigg{)} d\tau
	\end{split}	
\end{equation}
and so
\begin{equation*}
	\begin{split}
		\norm{(D \uu,D \ww)(\cdot,t)}_\LL^2 + 2\mu\int_{t_0}^{t} \norm{D^2 \uu(\cdot,\tau)}_\LL^2 d\tau  + 2\gamma \int_{t_0}^{t} \norm{D^2 \ww(\cdot,\tau)}_\LL^2 d\tau \\+ 2\int_{t_0}^{t}\norm{D \nabla \cdot \ww(\cdot, \tau)}_\LL^2 d\tau + 2 \chi \int_{t_0}^{t}\norm{D \ww(\cdot,\tau)}_\LL^2 d\tau  \leq \norm{(D \uu,D \ww)(\cdot,t_0)}_\LL^2 \\+ 4\int_{t_0}^{t}\norm{(\uu,\ww)(\cdot,\tau)}^{1/2}_\LL \norm{(D\uu,D\ww)(\cdot,\tau)}^{1/2}_\LL \norm{(D^2 \uu,D^2 \ww)(\cdot,\tau)}^2_\LL d\tau,
	\end{split}
\end{equation*}
where we have used a Sobolev-Nirenberg-Gagliardo (SNG) inequaltie (see (\ref{sobolev})). By (\ref{energy}), we can choose $ t_0 \geq t_*$ large enough such that
\begin{equation*}
C^2 \|(\uu_0,\ww_0)\|_{L^2(\RR^3)} 
\|(D\uu,D\ww)(\cdot,t_0)\|_{L^2(\RR^3)} < (\min \{\mu,\nu \})^2,
\end{equation*}
so that (\ref{D_energia}) gives $ \|(D\uu,D\ww)(\cdot,t)\|_{L^2(\RR^3)} \leq \|(D\uu,D\ww)(\cdot,t_0)\|_{L^2(\RR^3)} $ for all $t$ near $t_0$ by continuity. Actually, with this choice, it follows from ((\ref{D_energia}) again) that  
\begin{equation*} 
C^2 \|(\uu_0,\ww_0)\|_{L^2(\RR^3)} 
\|(D\uu,D\ww)(\cdot,s)\|_{L^2(\RR^3)} < (\min \{\mu,\nu \})^2, \text{       } \forall s \geq t_0.
\end{equation*}
Recalling (\ref{D_energia}), this implies that 
\begin{equation}\label{Du_t<Du_t0}
\|(D\uu,D\ww)(\cdot,t)\|_{L^2(\RR^3)} \leq \|(D\uu,D\ww)(\cdot,t_0)\|_{L^2(\RR^3)}, 
\end{equation}
for all $ t \geq t_0$. Since, by (\ref{energy}), $\| (D\uu,D\ww)(\cdot,t)\|^2_{L^2(\RR^3)} $ is integrable in $(0,\infty)$ one has, by (\ref{Du_t<Du_t0}),  that\footnote{
	Because a monotonic function $ f \in C^0 ((a,\infty)) \cap L^1((a,\infty)) $ has to satisfy $f(t) = o(1/t)$ as $t \to \infty$ (see e.g. \cite{KreissHagstromLorenzZingano2003}, p. 236).  	}  
\begin{equation}\label{Dnorm_to_zero}
\lim_{t \to \infty} t \| (D\uu,D\ww)(\cdot,t)\|^2_{L^2(\RR^3)} = 0.
\end{equation}

\begin{lemma}
	\begin{equation}\label{sobolev}
	\begin{split}
	\|\, (\uu,\ww) \,
	\|_{\mbox{}_{\scriptstyle L^{\infty}(\mathbb{R}^{3})}}
	\,\!
	\|\,  \:\!(D\uu,D\ww) \,
	\|_{\mbox{}_{\scriptstyle L^{2}(\mathbb{R}^{3})}}
	\:\!\\ \leq\; C 
	\|\, (\uu,\ww) \,
	\|_{\mbox{}_{\scriptstyle L^{2}(\mathbb{R}^{3})}}
	^{\:\!1/2}
	\:\!
	\|\,  \:\!(D\uu,D\ww) \,
	\|_{\mbox{}_{\scriptstyle L^{2}(\mathbb{R}^{3})}}
	^{\:\!1/2}
	\:\!
	\|\,  \,\!(D^{2}\uu,D^{2}\ww) \,
	\|_{\mbox{}_{\scriptstyle L^{2}(\mathbb{R}^{3})}}
	\!\:\!,
	\end{split}
	\end{equation}
\end{lemma}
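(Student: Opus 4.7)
The inequality is a packaged Gagliardo--Nirenberg--Agmon interpolation in $\RR^3$, and my plan is to reduce it to two classical scalar estimates and then reassemble. By the definitions in (\ref{definitions}), the norms of the pair $(\uu,\ww)$ are just the Euclidean norms of the stacked six-component vector field $(u_1,u_2,u_3,w_1,w_2,w_3)$; thus it suffices to prove the corresponding bound for a generic scalar function $f\in C^\infty_c(\RR^3)$ and then sum over components (via a standard H\"older manipulation) to recover the pair form.

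The two scalar ingredients I would use are, first, the three-dimensional Agmon embedding
\[
\|f\|_{L^\infty(\RR^3)} \,\leq\, C\, \|f\|_{L^2(\RR^3)}^{1/4}\, \|D^2 f\|_{L^2(\RR^3)}^{3/4},
\]
which follows from $\|f\|_{L^\infty}\leq (2\pi)^{-3/2}\|\hat f\|_{L^1}$ by splitting $\int|\hat f|\,d\xi$ at $|\xi|=R$ and applying Cauchy--Schwarz with weights $1$ and $|\xi|^{-2}$ on the two pieces (so that the first piece is bounded by $|B_R|^{1/2}\|f\|_{L^2}\sim R^{3/2}\|f\|_{L^2}$ and the second by $R^{-1/2}\||\xi|^{2}\hat f\|_{L^2}\sim R^{-1/2}\|D^2 f\|_{L^2}$), and optimizing in $R$; and second, the standard interpolation
\[
\|Df\|_{L^2(\RR^3)}^{2} \,\leq\, \|f\|_{L^2(\RR^3)}\, \|D^2 f\|_{L^2(\RR^3)},
\]
which follows from the identity $\sum_j\int|D_j f|^2\,dx = -\int f\,\Delta f\,dx$, Cauchy--Schwarz, and the Plancherel identity $\|\Delta f\|_{L^2}=\|D^2 f\|_{L^2}$.

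Combining the two is a matter of algebra: I would write $\|Df\|_{L^2} = \|Df\|_{L^2}^{1/2}\cdot\|Df\|_{L^2}^{1/2}$, estimate $\|f\|_{L^\infty}$ by Agmon and one factor $\|Df\|_{L^2}^{1/2}$ by the interpolation to obtain
\[
\|f\|_{L^\infty}\,\|Df\|_{L^2} \,\leq\, C\,\|f\|_{L^2}^{1/4}\|D^2 f\|_{L^2}^{3/4}\cdot \|f\|_{L^2}^{1/4}\|D^2 f\|_{L^2}^{1/4}\cdot \|Df\|_{L^2}^{1/2},
\]
and the fractional powers collapse to $C\|f\|_{L^2}^{1/2}\|Df\|_{L^2}^{1/2}\|D^2 f\|_{L^2}$. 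Passing from scalar $f$ to the pair $(\uu,\ww)$ --- viewing it as an $\RR^6$-valued map --- is routine, since both the Agmon embedding and the interpolation extend verbatim to vector-valued functions (they are stable under stacking components, by Minkowski/H\"older at the appropriate stage), so the combined bound extends as well. A density argument extends the inequality from Schwartz functions to all functions for which the right-hand side is finite. The only genuinely analytic step is the Agmon embedding, which is a short Fourier calculation, so I do not anticipate any serious obstacle.
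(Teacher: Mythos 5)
Your proposal is correct and follows essentially the same route as the paper: the paper likewise reduces to the scalar Agmon inequality $\|u\|_{L^\infty}\leq\|u\|_{L^2}^{1/4}\|D^2u\|_{L^2}^{3/4}$ and the interpolation $\|Du\|_{L^2}\leq\|u\|_{L^2}^{1/2}\|D^2u\|_{L^2}^{1/2}$ (both obtained by Fourier transform/Parseval, as in your sketch), together with the monotonicity of the pair norms, and combines them with exactly the algebraic split you describe. The only cosmetic differences are that you prove the first-order interpolation by integration by parts rather than on the Fourier side and spell out the Fourier-splitting proof of Agmon, which the paper delegates to a reference.
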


\begin{proof}
	Observe that
	\begin{equation}\label{sobolev0}
		\| \uu \|_{L^q(\RR^3)} \leq \| (\uu,\ww)\|_{L^q(\RR^3)}.
	\end{equation}

	\mbox{} \vspace{-0.650cm} \\
\begin{subequations}
	In $\RR^3$ pointwise values of functions can be
	estimated in terms of $H^{2}\!$ norms. 
	One has \\
	\mbox{} \vspace{-0.650cm} \\
	\mbox{} \vspace{-0.650cm} \\
	\begin{equation}\label{sobolev1}
	\|\, u \,
	\|_{\mbox{}_{\scriptstyle L^{\infty}(\mathbb{R}^{3})}}
	\:\!\leq\;
	\|\, u \,
	\|_{\mbox{}_{\scriptstyle L^{2}(\mathbb{R}^{3})}}
	^{\:\!1/4}
	\:\!
	\|\, D^{2} u \,
	\|_{\mbox{}_{\scriptstyle L^{2}(\mathbb{R}^{3})}}
	^{\:\!3/4}
	\end{equation}
	for $ u \in H^{2}(\mathbb{R}^{3}) $.
	\!These are easily shown
	by Fourier transform and Parseval's identity \linebreak
	(see e.g.\;\cite{SchutzZiebellZinganoZingano2015},
	where the optimal versions of (2.1) and their
	higher dimensional analogues are obtained.
	By Fourier transform,
	we also get
	(for any $n$): \\
	\mbox{} \vspace{-0.650cm} \\
	\begin{equation}\label{sobolev2}
		\|\, D \:\! u  \,
		\|_{\mbox{}_{\scriptstyle L^{2}(\mathbb{R}^{n})}}
		\:\!\leq\;
		\|\, u \,
		\|_{\mbox{}_{\scriptstyle L^{2}(\mathbb{R}^{n})}}
		^{\:\!1/2}
		\,\!
		\|\, D^{2} u \,
		\|_{\mbox{}_{\scriptstyle L^{2}(\mathbb{R}^{n})}}
		^{\:\!1/2}.
	\end{equation}
\end{subequations}

	Combining (\ref{sobolev0}), (\ref{sobolev1}) and (\ref{sobolev2}) we get the desired inequality.
\end{proof}

\begin{lemma}\label{lema2}
	
	Let $ \mbox{u} \in L^{r}(\mathbb{R}^{n}) $ and $e^{\;\!\nu \:\!\Delta \tau}$ the Heat Kernel, then
	\begin{equation}\label{calor-desi-geral}
	\|\, D^{\alpha} \:\![\,
	e^{\;\!\nu \:\!\Delta \tau} \,\!
	\mbox{u} \,]\,
	\|_{\mbox{}_{\scriptstyle L^{2}(\mathbb{R}^{n})}}
	\;\!\leq\;
	K\!\;\!(n,\:\!m)
	\;
	\|\: \mbox{u} \:
	\|_{\mbox{}_{\scriptstyle L^{r}(\mathbb{R}^{n})}}
	\:\!
	(\;\!\nu \;\!\tau\:\!)^{\mbox{}^{\scriptstyle
			\!\! -\, \frac{\scriptstyle n}{2}
			\left( \frac{1}{\scriptstyle r} \;\!-\;\! \frac{1}{2} \right)
			\,-\, \frac{|\:\!{\scriptstyle \alpha}\:\!|}{2} }}
	%
	\end{equation}
	\mbox{} \vspace{-0.125cm} \\
	for all $ \tau > 0 $
	and
	$ \alpha $ (multi-index), 		
	$ 1 \leq r \leq 2 $,
		$ n \geq 1 $,
	and
	$ \;\!m = |\;\!\alpha\:\!| $.
	($\:\!$For a proof of (\ref{calor-desi-geral}),
	see e.g.$\;$\cite{KreissLorenz1989,  			
		LorenzZingano2012})
\end{lemma}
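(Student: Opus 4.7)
The statement to prove is the smoothing estimate for the heat semigroup
$$
\|D^{\alpha}[e^{\nu \Delta \tau} u]\|_{L^{2}(\mathbb{R}^n)}
\;\leq\;
K(n,m)\,\|u\|_{L^{r}(\mathbb{R}^n)}\,
(\nu\tau)^{-\frac{n}{2}\left(\frac{1}{r}-\frac{1}{2}\right)-\frac{|\alpha|}{2}},
$$
for $1\leq r\leq 2$ and $m=|\alpha|$. My plan is to argue this by combining the explicit Gaussian representation of $e^{\nu\Delta\tau}$ with Young's convolution inequality; as a sanity check for the endpoint $r=2$, I would also sketch the Plancherel-based alternative, which gives a direct and transparent proof in that case.

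\textbf{Step 1 (representation).} Write $e^{\nu\Delta\tau}u = G_{\nu\tau}\ast u$, where $G_{\nu\tau}(x) = (4\pi\nu\tau)^{-n/2}\exp(-|x|^{2}/(4\nu\tau))$ is the Gaussian heat kernel. Since $u$ and $G_{\nu\tau}$ are sufficiently regular, the derivative passes to the kernel:
$$
D^{\alpha}[e^{\nu\Delta\tau}u] \;=\; (D^{\alpha}G_{\nu\tau})\ast u.
$$
\textbf{Step 2 (scaling for the kernel).} By the scaling $x \mapsto x/\sqrt{\nu\tau}$, for any $1\leq p\leq \infty$ one has
$$
\|D^{\alpha}G_{\nu\tau}\|_{L^{p}(\mathbb{R}^n)}
\;=\; C_{n,\alpha,p}\,(\nu\tau)^{-\frac{|\alpha|}{2}-\frac{n}{2}\left(1-\frac{1}{p}\right)},
$$
because $D^{\alpha}G_{\nu\tau}(x) = (\nu\tau)^{-n/2-|\alpha|/2}\,g_{\alpha}(x/\sqrt{\nu\tau})$ with $g_{\alpha}$ a Schwartz function depending only on $\alpha$ and $n$.

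\textbf{Step 3 (Young's inequality).} Apply Young's convolution inequality with exponents $p$ and $r$ chosen by $\tfrac{1}{p}+\tfrac{1}{r}=1+\tfrac{1}{2}$, i.e.\ $\tfrac{1}{p}=\tfrac{3}{2}-\tfrac{1}{r}$. This gives
$$
\|D^{\alpha}[e^{\nu\Delta\tau}u]\|_{L^{2}} \;\leq\; \|D^{\alpha}G_{\nu\tau}\|_{L^{p}}\,\|u\|_{L^{r}}.
$$
Using $1-\tfrac{1}{p}=\tfrac{1}{r}-\tfrac{1}{2}$ to rewrite the exponent from Step 2 yields exactly the claimed power of $\nu\tau$. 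The constraint $1\leq r\leq 2$ is equivalent to $1\leq p\leq\infty$, so Young applies on the whole claimed range.

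\textbf{Possible difficulties.} There is no real obstacle beyond bookkeeping. The only points that demand care are: verifying that $p\in[1,\infty]$ at the endpoints $r=1$ (giving $p=2$) and $r=2$ (giving $p=1$); checking that the constant $C_{n,\alpha,p}$ in Step 2 depends only on $n$ and $m=|\alpha|$, which follows from the explicit Schwartz form of $g_{\alpha}$; and, in the endpoint $r=2$, one may alternatively bypass Young's inequality altogether and use Plancherel together with the pointwise bound $|\xi|^{2|\alpha|}e^{-2\nu|\xi|^{2}\tau}\leq C(\nu\tau)^{-|\alpha|}$, which gives the sharp scaling immediately and serves as a consistency check for the general case.
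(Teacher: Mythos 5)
Your proof is correct, and it is worth noting that the paper itself gives no proof of this lemma at all: it simply refers the reader to \cite{KreissLorenz1989,LorenzZingano2012}. So your Gaussian-kernel-plus-Young argument is a complete, self-contained substitute rather than a parallel of anything in the text, and it is the standard one: writing $D^{\alpha}e^{\nu\Delta\tau}u=(D^{\alpha}G_{\nu\tau})\ast u$, using the scaling identity $\|D^{\alpha}G_{\nu\tau}\|_{L^{p}}=C_{n,\alpha,p}(\nu\tau)^{-|\alpha|/2-\frac{n}{2}(1-1/p)}$, and applying Young with $1/p=3/2-1/r$ gives exactly the stated power of $\nu\tau$. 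Two small bookkeeping remarks. First, your claim that $1\le r\le 2$ is ``equivalent'' to $1\le p\le\infty$ is not quite right: for $r\in[1,2]$ the Young exponent satisfies $p\in[1,2]$, which is all you need, but the converse range would allow $r<1$. Second, the constant $\|D^{\alpha}g\|_{L^{p}}$ a priori depends on the particular multi-index $\alpha$ and on $p$ (hence on $r$); since $D^{\alpha}g$ is Schwartz and $p$ ranges over the compact interval $[1,2]$, you can take a finite supremum over $|\alpha|=m$ and $p\in[1,2]$ to obtain a constant $K(n,m)$ depending only on $n$ and $m$, as the lemma asserts — this deserves one explicit line. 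Your Plancherel check at $r=2$ is fine; if you want a Fourier-side proof valid for all $1\le r\le 2$ (closer in spirit to the cited references), combine Plancherel with Hausdorff--Young, estimating $\|\,|\xi|^{m}e^{-\nu\tau|\xi|^{2}}\hat{u}\,\|_{L^{2}}$ by H\"older with $\|\hat{u}\|_{L^{r'}}\le\|u\|_{L^{r}}$ and $\|\,|\xi|^{m}e^{-\nu\tau|\xi|^{2}}\|_{L^{q}}=C(\nu\tau)^{-m/2-n/(2q)}$, $1/q=1/r-1/2$, which reproduces the same rate.
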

\,
\,
\,
\,
\,
\,
\


%
%
{\bf 3. Proof of Main Theorem }
\\
\par First we will prove the result for $\ww$. By (\ref{Dnorm_to_zero}), given $\epsilon >0$, there exists $t_0 > 0$ such that
\begin{equation}
	\norm{(D \uu, D\ww)(\cdot,t)}_\LL< \epsilon t^{-\frac{1}{2}} \mbox{ \,\,\,\, } \forall t>t_0.
\end{equation}

We begin with the inviscid vortex case, i.e., $\chi = 0$. Rewrite equation (\ref{ww_equation}) as
\begin{equation*}
	\ww_t = \gamma \Delta \ww + Q_1,
\end{equation*}
where  $Q_1 = -\uu \cdot \nabla \ww + \nabla(\nabla \cdot \ww)$. By Duhamel's Principle, we get
\begin{equation}
	\begin{split}
		\norm{\ww(\cdot,t)}_\LL \leq \underbrace{\norm{e^{\gamma \Delta(t-t_0)}\ww(\cdot,t_0)}_\LL}_{I} + \underbrace{\int_{t_0}^{t} \norm{e^{\gamma \Delta(t-s)}  \uu \cdot \nabla \ww}_\LL ds}_{II} \\ + \underbrace{\int_{t_0}^{t} \norm{e^{\gamma \Delta(t-s)} \nabla (\nabla \cdot \ww)}_\LL ds}_{III}.
	\end{split}
\end{equation}  

$I$ is the solution of heat equation with initial condition $\ww(\cdot,t_0) \in L^2(\RR^3)$, and so, 
\begin{equation}\label{heatkernel_to_zero}
	\lim_{t \to \infty} \norm{e^{\gamma \Delta(t-t_0)}\ww(\cdot,t_0)}_\LL = 0.
\end{equation}
To estimate the other terms, we use lemma (\ref{lema2}). 
\begin{equation*}
	\begin{split}
	II=	\int_{t_0}^{t} \norm{e^{\gamma \Delta(t-s)}  \uu \cdot \nabla \ww}_\LL ds \leq K \gamma^{-3/4} \int_{t_0}^{t} (t-s)^{-3/4} \norm{\uu \cdot \nabla \ww}_{L^1(\RR^{3})} ds \\ \leq K \gamma^{-3/4}\int_{t_0}^{t} (t-s)^{-3/4} \norm{\uu}_\LL \norm{D \ww}_\LL ds \\ \leq K \epsilon \gamma^{-3/4} \norm{(\uu,\ww)(\cdot,t_0)}_\LL \int_{t_0}^{t} (t-s)^{-3/4} s^{-1/2} ds
	\\ \leq 
	\int_{t_0}^{t} (t-s)^{-3/4} s^{-1/2} ds \leq C t^{-1/4}.
\end{split}
\end{equation*}
Therefore,
\begin{equation}\label{uu_ww_norm_t1e3}
	\lim_{t \rightarrow \infty}\int_{t_0}^{t} \norm{e^{\gamma \Delta(t-s)}  \uu \cdot \nabla \ww}_\LL ds = 0.
\end{equation}
Similarly,  
\begin{equation*}
	\begin{split}
		III = \int_{t_0}^{t} \norm{e^{\gamma \Delta(t-s)} \nabla (\nabla \cdot \ww)}_\LL ds \leq K \gamma^{-1/2} \int_{t_0}^{t} (t-s)^{-1/2} \norm{D \ww}_\LL ds \\ \leq K \epsilon \gamma^{-1/2} \int_{t_0}^{t} (t-s)^{-1/2} s^{-1/2} ds
		\\ \leq
		C \epsilon.
	\end{split}
\end{equation*}
Therefore
\begin{equation}\label{uu_ww_norm_t1e4}
	\lim_{t \rightarrow \infty} \int_{t_0}^{t} \norm{e^{\gamma \Delta(t-s)} \nabla (\nabla \cdot \ww)}_\LL ds = 0.
\end{equation}
That is, if $\chi =0$, then

\begin{equation*}
	\lim_{t \rightarrow \infty} \norm{\ww(\cdot,t)}_\LL = 0.
\end{equation*}

Now, suppose that $\chi > 0$. Rewrite equation (\ref{ww_equation}) as

\begin{equation*}
\ww_t = \gamma \Delta \ww + Q_1 - 2 \chi \ww,
\end{equation*}
where  $Q_1 = -\uu \cdot \nabla \ww + \nabla(\nabla \cdot \ww)$. Define $\zz = e^{2 \chi t} \ww$. In this way, we obtain that
\begin{equation*}
	\zz_t = \gamma \Delta \zz + e^{2 \chi t} Q_1.
\end{equation*}
Applying Duhamel's Principle, we have
\begin{equation*}
\zz(\cdot , t) = e^{\gamma \Delta(t - t_0)}\zz(\cdot , t_0) + \int_{t_0}^{t}e^{\gamma \Delta(t - s)}e^{2 \chi s} Q_1(\cdot , s) ds,
\end{equation*}
that is,
\begin{equation*}
	\begin{split}
		\ww(\cdot , t) = e^{- 2 \chi(t - t_0)} e^{\gamma \Delta(t - t_0)}\ww(\cdot , t_0) - \int_{t_0}^{t} e^{-2 \chi (t - s)}e^{\gamma \Delta(t - s)}\uu \cdot \nabla \ww ds \\ + \int_{t_0}^{t} e^{-2 \chi (t - s)}e^{\gamma \Delta(t - s)}\nabla(\nabla \cdot \ww) ds + \chi \int_{t_0}^{t} e^{-2 \chi (t - s)}e^{\gamma \Delta(t - s)}(\nabla \times \uu) ds.
	\end{split}
\end{equation*}
Multiplying by $t^{1/2}$, applying $L^{2}$ norm  and Minkowski inequality, we obtain
\begin{equation*}
	\begin{split}
		t^\frac{1}{2} \norm{\ww(\cdot , t)}_\LL \leq \underbrace{t^\frac{1}{2} e^{- 2 \chi(t - t_0)}\norm{e^{\gamma \Delta(t - t_0)}\ww(\cdot , t_0)}_\LL}_{I}\\ 
		+ \underbrace{t^\frac{1}{2}\int_{t_0}^{t} e^{-2 \chi (t - s)}\norm{e^{\gamma \Delta(t - s)}\uu \cdot \nabla \ww}_\LL ds}_{II} \\
		+ \underbrace{t^\frac{1}{2} \int_{t_0}^{t} e^{-2 \chi (t - s)}\norm{e^{\gamma \Delta(t - s)}\nabla(\nabla \cdot \ww)}_\LL ds}_{III} \\
		+ \underbrace{\chi t^\frac{1}{2} \int_{t_0}^{t} e^{-2 \chi (t - s)}\norm{e^{\gamma \Delta(t - s)}\nabla \times \uu}_\LL ds}_{IV}.
	\end{split}
\end{equation*}
By (\ref{heatkernel_to_zero}), we have that 
\begin{equation*}
   \lim_{t \to \infty}t^\frac{1}{2} e^{- 2 \chi(t - t_0)}\norm{e^{\gamma \Delta(t - t_0)}\ww(\cdot , t_0)}_\LL = 0.
\end{equation*}
To estimate the other terms, we use again lemma $2$.
\begin{equation*}
	\begin{split}
	 II = t^{1/2}\int_{t_0}^{t} e^{-2 \chi (t - s)}\norm{e^{\gamma \Delta(t - s)}\uu \cdot \nabla \ww}_\LL ds \\ \leq K \gamma^{-3/4} t^{1/2}\int_{t_0}^{t} e^{-2 \chi (t - s)} (t-s)^{-3/4} \norm{\uu \cdot \nabla \ww}_{L^1(\RR^3)}ds \\ \leq K\gamma^{-3/4} t^{1/2}\int_{t_0}^{t} e^{-2 \chi (t - s)} (t-s)^{-3/4} \norm{\uu}_\LL \norm{D \ww}_\LL ds \\ \leq  K\gamma^{-3/4} \norm{(\uu,\ww)(\cdot,t_0)}_\LL t^{1/2}\int_{t_0}^{t} e^{-2 \chi (t - s)} (t-s)^{-3/4} \norm{D \ww}_\LL ds \\ \leq K \epsilon \gamma^{-3/4} \norm{(\uu,\ww)(\cdot,t_0)}_\LL t^{1/2}\int_{t_0}^{t} e^{-2 \chi (t - s)} (t-s)^{-3/4} s^{-1/2} ds
	 \\\leq K \epsilon \gamma^{-3/4} \norm{(\uu,\ww)(\cdot,t_0)}_\LL\Bigg( e^{- \chi t}t^{1/4} + (2 \chi)^{-1/4} \Gamma \bigg{(}\frac{1}{4} \bigg{)}\Bigg).
	\end{split}
\end{equation*}
Therefore
\begin{equation*}
\lim_{t \rightarrow \infty}t^{1/2}\int_{t_0}^{t} e^{-2 \chi (t - s)}\norm{e^{\gamma \Delta(t - s)}\uu \cdot \nabla \ww}_\LL ds = 0.
\end{equation*}
Similarly,
\begin{equation*}
	\begin{split}
		III = t^{1/2} \int_{t_0}^{t} e^{-2 \chi (t - s)}\norm{e^{\gamma \Delta(t - s)}\nabla(\nabla \cdot \ww)}_\LL ds \\ \leq K \gamma^{-1/2} t^{1/2}  \int_{t_0}^{t} e^{-2 \chi (t - s)} (t-s)^{-1/2} \norm{D \ww}_\LL ds \\ \leq K \epsilon \gamma^{-1/2} t^{1/2}  \int_{t_0}^{t} e^{-2 \chi (t - s)} (t-s)^{-1/2} s^{-1/2} ds
	\\ \leq K \epsilon \gamma^{-1/2}  t^{1/2} \int_{t_0}^{t} e^{-2 \chi (t - s)} (t-s)^{-1/2} s^{-1/2} ds \leq K \epsilon \gamma^{-1/2}  \Bigg( e^{- \chi t}t^{1/2} + (2 \chi)^{-1/2} \sqrt{\pi}\Bigg).
	\end{split}
\end{equation*}
Therefore
\begin{equation*}
	\lim_{t \rightarrow \infty}t^\frac{1}{2} \int_{t_0}^{t} e^{-2 \chi (t - s)}\norm{e^{\gamma \Delta(t - s)}\nabla(\nabla \cdot \ww)}_\LL ds = 0.
\end{equation*}
Finally,
\begin{equation*}
	\begin{split}
		\chi t^{1/2} \int_{t_0}^{t} e^{-2 \chi (t - s)}\norm{e^{\gamma \Delta(t - s)}\nabla \times \uu}_\LL ds \leq \chi t^{1/2} \int_{t_0}^{\frac{t}{2}} e^{-2 \chi (t - s)}\norm{e^{\gamma \Delta(t - s)}\nabla \times \uu}_\LL ds \\+ \chi t^{1/2} \int_{\frac{t}{2}}^{t} e^{-2 \chi (t - s)}\norm{e^{\gamma \Delta(t - s)}\nabla \times \uu}_\LL ds 
\end{split}
\end{equation*}
\begin{equation*} 
\begin{split} 		
		 \leq K \chi \gamma^{-1/2} t^{1/2} e^{-\chi t} \int_{t_0}^{\frac{t}{2}} (t-s)^{-1/2} \norm{\uu}_\LL ds + \chi \epsilon t^{1/2} \int_{\frac{t}{2}}^{t} e^{-2\chi(t-s)} s^{-1/2} ds  \\\leq K \bigg{[} \chi \gamma^{-1/2}  \norm{(\uu, \ww)(\cdot,t_0)}_\LL t^{1/2} e^{-\chi t} \int_{t_0}^{\frac{t}{2}} (t-s)^{-1/2} ds \\+ \chi \epsilon t^{1/2}  t^{-1/2} \int_{\frac{t}{2}}^{t} e^{-2\chi(t-s)} ds \bigg{]}  \leq K \bigg{[} \chi \gamma^{-1/2}  \norm{(\uu, \ww)(\cdot,t_0)}_\LL e^{-\chi t} + \chi \epsilon \frac{1 - e^{-\chi t}}{2 \chi} \bigg{]}.
	\end{split}
\end{equation*}
Therefore
\begin{equation}\label{ww_norm_e4}
	\lim_{t \rightarrow \infty}\chi t^{1/2} \int_{t_0}^{t} e^{-2 \chi (t - s)}\norm{e^{\gamma \Delta(t - s)}\nabla \times \uu}_\LL ds = 0.
\end{equation}
That is, if $\chi > 0$, then
\begin{equation*}
\lim_{t \rightarrow \infty} t^{1/2}\norm{\ww(\cdot,t)}_\LL = 0.
\end{equation*}
Now, we show that $\norm{\uu(\cdot,t)}_\LL \to 0$ whether $\chi = 0$ or $\chi > 0$. Rewrite equation (\ref{uu_equation}) as
\begin{equation*}
	\uu_t = (\mu + \chi) \Delta \uu + Q_2.
\end{equation*}

Where $Q_2 = \chi \nabla \times \ww - \uu \cdot \nabla \uu - \nabla p$. We can write $Q_2$ as

\begin{equation*}
	Q_2 = \PP_h [\chi \nabla \times \ww - \uu \cdot \nabla \uu],
\end{equation*}
where $\PP_h$ denotes the Helmholtz-Leray decomposition. By Duhamel's Principle
\begin{equation*}
	\uu(\cdot, t) = e^{(\mu + \chi)\Delta(t-t_0)} \uu(\cdot, t_0) + \int_{t_0}^t e^{(\mu + \chi)\Delta(t-s)} Q_2(\cdot, s) ds.
\end{equation*}

Since heat kernel commutes with Helmholtz Projector, applying  $L^2$ norm and Minkowski's inequality, we have

\begin{equation*}
	\begin{split}
		\norm{\uu(\cdot, t)}_\LL \leq \underbrace{\norm{e^{(\mu + \chi)\Delta(t-t_0)} \uu(\cdot, t_0)}_\LL}_{I} + \underbrace{\int_{t_0}^t \norm{e^{(\mu + \chi)\Delta(t-s)} \uu \cdot \nabla \uu}_\LL ds}_{II}\\ + \underbrace{\chi \int_{t_0}^t \norm{e^{(\mu + \chi)\Delta(t-s)} \nabla \times \ww}_\LL ds}_{III}.
	\end{split}
\end{equation*}
As in (\ref{heatkernel_to_zero}), $I$ is the solution of the heat equation with initial condition $\uu(\cdot,t_0)$. Hence,
\begin{equation*}
	\lim_{t \to \infty} \norm{e^{\gamma \Delta(t-t_0)}\uu(\cdot,t_0)}_\LL = 0.
\end{equation*}
To estimate the other terms, we use again lemma $2$.
\begin{equation*}
	\begin{split}
		\int_{t_0}^t \norm{e^{(\mu + \chi)\Delta(t-s)} \uu \cdot \nabla \uu}_\LL ds \leq K (\mu + \chi)^{-\frac{3}{4}} \int_{t_0}^{t} (t-s)^{-\frac{3}{4}} \norm{\uu \cdot \nabla \uu}_{L^1(\RR^3)} ds \\ \leq K (\mu + \chi)^{-\frac{3}{4}} \int_{t_0}^{t} (t-s)^{-\frac{3}{4}} \norm{\uu(\cdot,s)}_\LL \norm{D \uu (\cdot, s)}_\LL ds \\ \leq K (\mu + \chi)^{-\frac{3}{4}} \norm{(\uu,\ww)(\cdot,t_0)}_\LL \int_{t_0}^{t} (t-s)^{-\frac{3}{4}} \norm{D \uu (\cdot, s)}_\LL ds \\
		< K \epsilon \int_{t_0}^{t} (t-s)^{-\frac{3}{4}} s^{-\frac{1}{2}} ds \leq K \epsilon t^{-\frac{1}{4}}.
	\end{split}
\end{equation*}
Therefore
\begin{equation*}
\lim_{t \rightarrow \infty} \int_{t_0}^t \norm{e^{(\mu + \chi)\Delta(t-s)} \uu \cdot \nabla \uu}_\LL ds = 0.
\end{equation*}
We only need to worry about $III$ when $\chi > 0$, but in this case we may assume that $t_0$ is big enough such that
\begin{equation*}
\norm{\ww(\cdot,t)}_\LL < \epsilon t^{-\frac{1}{2}} \mbox{ } \forall t>t_0
\end{equation*}	
and so, we have
\begin{equation*}
	\begin{split}
		\chi \int_{t_0}^t \norm{e^{(\mu + \chi)\Delta(t-s)} \nabla \times \ww}_\LL ds	\leq K \chi (\mu + \chi)^{-1/2} \int_{t_0}^t (t-s)^{-1/2} \norm{\ww}_\LL ds \\ \leq K \chi \epsilon (\mu + \chi)^{-1/2} \int_{t_0}^t (t-s)^{-1/2} s^{-1/2} ds \\ \leq K \chi \epsilon (\mu + \chi)^{-1/2}.
	\end{split}
\end{equation*}
Therefore
\begin{equation}\label{uu_ww_norm_e4}
	\lim_{t \rightarrow \infty} \chi \int_{t_0}^t \norm{e^{(\mu + \chi)\Delta(t-s)} \nabla \times \ww}_\LL ds = 0.
\end{equation}
That is,
\begin{equation*}
	\lim_{t \to \infty} \norm{\uu(\cdot,t)}_\LL = 0
\end{equation*}
and the proof of main theorem is complete.

%
%
\nl
\mbox{} \vspace{-1.750cm} \\
%

%

%
%
%

%
%

\nl
\mbox{} \vspace{-0.250cm} \\
\nl
{\small

\nl
\mbox{} \vspace{-0.450cm} \\
\nl
\begin{minipage}[t]{10.00cm}
	\mbox{\normalsize \textsc{Cilon Perusato}} \\
	Departamento de Matem\'atica Pura e Aplicada \\
	Universidade Federal do Rio Grande do Sul \\
	Porto Alegre, RS 91509-900, Brazil \\
	E-mail: {\sf cilonperusato@gmail.com} \\
	
	\mbox{\normalsize \textsc{Juliana Nunes}} \\
	Departamento de Matem\'atica Pura e Aplicada \\
	Universidade Federal do Rio Grande do Sul \\
	Porto Alegre, RS 91509-900, Brazil \\
	E-mail: {\sf juliana.s.ricardo@gmail.com} \\
	
	\mbox{\normalsize \textsc{Robert Guterres}} \\
	Departamento de Matem\'atica Pura e Aplicada \\
	Universidade Federal do Rio Grande do Sul \\
	Porto Alegre, RS 91509-900, Brazil \\
	E-mail: {\sf rguterres.mat@gmail.com} \\
\end{minipage}
}
%
%

%
%

\end{document}